\theoremstyle{plain}
\newtheorem{thm}{Theorem}[section]
\newtheorem{cor}[thm]{Corollary}
\newtheorem{rem}[thm]{Remark}
\newtheorem{ques}[thm]{Question}
\newtheorem{conj}[thm]{Conjecture}
\def\cal{\mathcal}
\def\bbb{\mathbb}
\def\op{\operatorname}
\renewcommand{\phi}{\varphi}
\newcommand{\R}{\bbb{R}}
\begin{document}
\title[Rational solutions of equations involving norms]{Rational solutions of certain Diophantine equations involving norms}
\author{Maciej Ulas}

\keywords{rational point, norm form, Ch\^{a}telet threefold, unirationality} \subjclass[2010]{11D57, 11D85}
\thanks{Research of the author was supported by Polish Government funds for science, grant IP 2011 057671 for the years 2012--2013.}

\begin{abstract}
In this note we present some results concerning the unirationality of the algebraic variety $\cal{S}_{f}$ given by the equation
\begin{equation*}
N_{K/k}(X_{1}+\alpha X_{2}+\alpha^2 X_{3})=f(t),
\end{equation*}
where $k$ is a number field, $K=k(\alpha)$, $\alpha$ is a root of an irreducible polynomial $h(x)=x^3+ax+b\in k[x]$ and $f\in k[t]$. We are mainly interested in the case of pure cubic extensions, i.e. $a=0$ and $b\in k\setminus k^{3}$. We prove that if $\op{deg}f=4$ and the variety $\cal{S}_{f}$ contains a $k$-rational point $(x_{0},y_{0},z_{0},t_{0})$ with $f(t_{0})\neq 0$, then $\cal{S}_{f}$ is $k$-unirational. A similar result is proved for a broad family of quintic polynomials $f$ satisfying some mild conditions (for example this family contains all irreducible polynomials). Moreover, the unirationality of $\cal{S}_{f}$ (with non-trivial $k$-rational point) is proved for any polynomial $f$ of degree 6 with $f$ not equivalent to the polynomial $h$ satisfying the condition $h(t)\neq h(\zeta_{3}t)$, where $\zeta_{3}$ is the primitive third root of unity. We are able to prove the same result for an extension of degree 3 generated by the root of polynomial $h(x)=x^3+ax+b\in k[x]$,  provided that $f(t)=t^6+a_{4}t^4+a_{1}t+a_{0}\in k[t]$ with $a_{1}a_{4}\neq 0$.

\end{abstract}

\maketitle

\section{Introduction}\label{sec1}
Let $k$ be a number field and $K/k$ be an algebraic extension of degree $n$. There is a lot of papers devoted to the study of $k$-rational solutions of Diophantine equations of the form
\begin{equation}\label{hyp}
N_{K/k}(X_{1}\omega_{1}+\ldots+X_{n}\omega_{n})=f(t),
\end{equation}
where $N_{K/k}$ is a full norm form for the extension $K/k$, $\{\omega_{1},\ldots,\omega_{n}\}$ is a fixed basis of the extension and $f(t)$ is a polynomial over $k$. The main problem here is the question whether the Hasse principle, or in other words local to global principle, holds for the smooth proper model of a hypersurface given by the equation (\ref{hyp}). For example, if $f(t)$ is constant then the local to global principle holds for (\ref{hyp}) (Hasse). If $n=2$ and $\op{deg}f=3$ or $4$ then the variety defined by (\ref{hyp}) is called a Ch\^{a}telet surface. The arithmetic of these surfaces is well understood. In particular, in \cite{CTSSD1, CTSSD2} it is proved that the Brauer-Manin obstruction for the Hasse principle and weak approximation is the only one. Moreover, the existence of a $k$-rational solution implies $k$-unirationality. These results are unconditional. However, the most general result in this area is obtained under Schinzel's hypothesis (H) and says that if $K$ is a cyclic extension of a number field $k$, and $f(t)$ is a separable polynomial of arbitrary degree, then the Brauer-Manin obstruction to the Hasse principle and weak approximation is the only one for the smooth and projective model $X$ of the variety given by the equation (\ref{hyp}). Moreover, if there is no Brauer-Manin obstruction to the Hasse principle then the $k$-rational points are Zariski dense in $X$.

Most of the results in this area were proved using algebraic considerations (via the computation of the Brauer-Manin obstructions) or a combination of algebraic methods together with analytic techniques (see for example \cite{HS}). However, only few papers present constructions which allow to produce new solutions from a given $k$-rational solution of (\ref{hyp}). As was mentioned in \cite[p.162]{HS}, usually this is a rather difficult problem.

We are working with a field $k$ of characteristic 0 and an algebraic extension $K/k$ of degree $n$. We take $\omega_{i}=\alpha^{i-1}$ for $i=1,\ldots,n$, where $\alpha\in K$ is chosen in such a way that $K=k(\alpha)$. We thus are interested in the equation
\begin{equation}\label{generalequation}
\op{N}_{K/k}(X_{1},\ldots,X_{n})=f(t),
\end{equation}
where in order to shorten the notation we put
\begin{equation*}
\op{N}_{K/k}(X_{1},\ldots,X_{n}):=N_{K/k}(X_{1}+\alpha X_{2}\ldots+\alpha^{n-1} X_{n}),
\end{equation*}
i.e. $\op{N}_{K/k}$ will denote a norm form, and $N_{K/k}$ denotes the corresponding field norm. In the sequel by a {\it non-trivial} solution of (\ref{generalequation}) we mean a solution $(X_{1},\ldots,X_{n},t)$ which satisfies $f(t)\neq 0$. The present paper is a contribution to the subject in that we show that in some cases the existence of one $k$-rational solution of (\ref{generalequation}) implies the existence of infinitely many $k$-rational solutions. This is obtained mainly by constructing parametric solution of the corresponding equation, or, in a more geometric language, by constructing of a $k$-rational curve lying on the corresponding algebraic variety. Of course, we are interested only in the existence of $k$-rational curves which are not contained in the fiber of the map $\Phi: \cal{S}_{f}\ni (X_{1},\ldots,X_{n},t)\mapsto t\in \mathbb{P}^{1}(k)$. Our argument is based on a similar approach proposed by Mestre in a series of papers \cite{Mes1, Mes2, Mes3} devoted to the study of the existence of rational points on (generalized) Ch\^{a}telet surfaces, i.e. surfaces defined by (\ref{generalequation}) with $n=2$ and $\op{deg}f\geq 5$.

Let us describe the content of the paper in some details. In Section \ref{sec2}, we prove that if $K/k$ is a pure cubic extension generated by the root of the polynomial $h(x)=x^3+b\in k[x]$, $f\in k[t]$ is of degree 4, and the variety $\cal{S}_{f}$ defined by the equation (\ref{generalequation}) contains a non-trivial $k$-rational point, then $\cal{S}_{f}$ is unirational over $k$. In particular, in this case the set of $k$-rational points on $\cal{S}_{f}$ is Zariski dense. We prove a similar result for $f\in k[t]$ of degree 5, provided that $f$  satisfies some mild conditions. In particular, if $f$ is an irreducible polynomial, then $\cal{S}_{f}$ is $k$-unirational. We also prove that if $f\in k[t]$ is a monic polynomial of degree 6, $S_{f}$ contains non-trivial $k$ rational point and the polynomial $f$ is not equivalent to a polynomial $h\in k[t]$ satisfying the condition $h(t)\neq h(\zeta_{3}t)$, then the variety $\cal{S}_{f}$ given by equation (\ref{generalequation}) is $k$-unirational. This result is particulary interesting in the light of a recent work of V\'{a}rilly-Alvarado and Viray \cite{VarBir}. Indeed, in the case under consideration the variety $\cal{S}_{f}$ is a so called {\it Ch\^{a}telet threefold} (in the terminology of \cite{VarBir}). The authors of the cited paper asked whether the existence of a $k$-rational point on $\cal{S}_{f}$ implies $k$-unirationality \cite[Problem 6.2]{VarBir}. Our result shows that $\cal{S}_{f}$ is $k$-unirational for a broad class of polynomials. Moreover, if $k$ is a number field with a real embedding, we prove that for each polynomial $f(t)=a_{0}t^6+\sum_{i=0}^{4}a_{6-i}t^i\in k[t]$ and any given $\epsilon >0$ there exists a polynomial $g(t)=c_{0}t^6+\sum_{i=0}^{4}c_{6-i}t^i\in k[t]$ which is close to $f$, i.e. $|a_{i}-c_{i}|<\epsilon$ for $i=0,2,\ldots,6$, and such that for any $b\in k\setminus k^{3}$ and a pure cubic extension $K/k$ generated by the root of the polynomial $h(x)=x^3+b$, the variety $\cal{S}_{g}$ is unirational over $k$.

In Section \ref{sec3}, we consider the variety $\cal{S}_{f}$ defined by the equation (\ref{generalequation}) involving a norm form of an extension $K/k$ generated by the root of an irreducible polynomial $h(x)=x^3+ax+b\in k[x]$. We prove that if $f(t)=t^{6}+a_{4}t^4+a_{1}t+a_{0}\in k[t]$, $a_{1}a_{4}\neq 0$ then the variety $\cal{S}_{f}$ is unirational over $k$. Moreover, we give a remark concerning the unirationality of slightly more general varieties defined by equations of the form $F(x,y,z)=f(t)$, where $F$ is a homogenous form of degree 3 and $f$ is a polynomial.

\section{Solutions of the equation $\op{N}_{K/k}(X_{1},X_{2},X_{3})=f(t)$ with pure cubic extension $K/k$ and $f$ of degree $\leq 6$}\label{sec2}

Let $k$ be a field of characteristic 0 and $K/k$ be an extension of degree 3 generated by the root, say $\alpha$, of the irreducible polynomial $h(x)=x^3+ax+b$ defined over $k$. We are interested in the rational points lying on the variety defined by the equation
\begin{equation}\label{degree3}
\cal{S}_{f}:\;\op{N}_{K/k}(X_{1},X_{2},X_{3})=f(t),
\end{equation}
where  $f\in k[t]$. In this section we consider the case of $f$ of degree $\leq 6$. Since we are interested in $k$-unirationality of $\cal{S}_{f}$, we make the assumption that the set of $k$-rational points on $\cal{S}_{f}$ is non empty. To be more precise, we assume that there is a nontrivial $k$-rational point lying on $\cal{S}_{f}$, i.e. there is a $P=(x_{0},y_{0},z_{0},t_{0})\in \cal{S}_{f}(k)$ such that $f(t_{0})\neq 0$. In particular the point $P$ is a smooth point on $\cal{S}_{f}$. In this section we consider the case of a pure cubic extension $K/k$, i.e. $K$ is generated by the root of a polynomial $h$ with $a=0$. Let us recall that in this case
\begin{equation*}
\op{N}_{K/k}(X_{1},X_{2},X_{3})=X_{1}^3-bX_{2}^3+b^2X_{3}^3+3bX_{1}X_{2}X_{3}.
\end{equation*}
Before we state our results let us note that $\cal{S}_{f}$ is isomorphic with $\cal{S}_{g}$ , where $g(t)=\sum_{i=1}^{6}c_{i}t^i+1$. Indeed, making a change of variables  $t\mapsto t+t_{0}$ we can assume that $f(0)=c_{0}=\op{N}_{K/k}(u,v,w)\neq 0$ for some $u,v,w\in k$. Multiplying this equation by $c_{0}^{-1}=\op{N}_{K/k}(u',v',w')$, with $u',v',w'$ chosen in such a way that $\op{N}_{K/k}(u,v,w)\op{N}_{K/k}(u',v',w')=1$, and using the multiplicative property of a norm form, we get the desired form of our equation. It is clear that $S_{f}$ is $k$-unirational if and only if $S_{g}$ is $k$-unirational.

We are ready to prove the following result.

\begin{thm}\label{degree4}
Let $k$ be a field of characteristic 0 and let $K=k(\alpha)$, where $\alpha^3+b=0$ with $b\in k\setminus k^{3}$. Put $g(t)=1+\sum_{i=1}^{6}c_{i}t^i\in k[t]$ and let us suppose that
\begin{equation}\label{except}
(c_{2},c_{4},c_{5})\neq \left(\frac{5 c_1^2}{12},\;-\frac{1}{144} c_1(5 c_1^3-72 c_3),\;-\frac{1}{144} c_1^2(c_1^3-12c_3)\right).
\end{equation}
Then the variety $\cal{S}_{g}$ is $k$-unirational.
\end{thm}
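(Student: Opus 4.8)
The plan is to combine the torus structure of the fibres of $\cal{S}_{g}$ with the rationality of small tori. Let $\Phi\colon\cal{S}_{g}\to\mathbb{A}^{1}$ be the projection $(X_{1},X_{2},X_{3},t)\mapsto t$. Over any $t_{0}$ with $g(t_{0})\neq 0$ the fibre is $\{\op{N}_{K/k}(X_{1},X_{2},X_{3})=g(t_{0})\}$, i.e.\ the set of elements of $K$ of norm $g(t_{0})$; this is a torsor under the norm-one torus $T=\{c\in K^{\times}\colon N_{K/k}(c)=1\}$, which acts on it simply transitively by multiplication. Because $[K:k]=3$, the torus $T$ has dimension two, and every torus of dimension at most two is rational over its base field. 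Hence it suffices to exhibit a single $k$-rational curve $C\subset\cal{S}_{g}$ along which $t$ is non-constant: the fibrewise multiplication
\begin{equation*}
T\times C\longrightarrow\cal{S}_{g},\qquad (c,P)\longmapsto c\ast P,
\end{equation*}
where $\ast$ is the multiplication of $K$ in the basis $1,\alpha,\alpha^{2}$, is then dominant, since the $T$-orbit of a point fills its fibre and $t(C)$ is dense in $\mathbb{A}^{1}$. As $T\times C$ is rational, this yields the $k$-unirationality of $\cal{S}_{g}$, and the whole problem reduces to producing one such curve.

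To construct $C$ I would begin from the non-trivial point $(1,0,0,0)\in\cal{S}_{g}(k)$, available because $g(0)=1=\op{N}_{K/k}(1,0,0)$, and search among curves passing through the origin. The first natural model is the quadratic one, $X_{1}=1+pt+ut^{2}$, $X_{2}=qt+vt^{2}$, $X_{3}=rt+wt^{2}$, with $t$ as parameter. Expanding $\op{N}_{K/k}(X_{1},X_{2},X_{3})$ and equating it with $g$ term by term, the coefficient of $t$ gives $p=c_{1}/3$, the coefficient of $t^{2}$ expresses $u$ linearly through the cross term $qr$, and the coefficient of $t^{3}$ is linear in $(v,w)$. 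Substituting these relations, the matching of the remaining coefficients reduces to a small system in the surviving unknowns whose solvability over $k$ is governed by the non-vanishing of a single polynomial in $c_{1},\dots,c_{5}$, and the excluded locus (\ref{except}) is precisely where this polynomial vanishes.

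The delicate point is the leading coefficient $c_{6}$. Comparing leading terms of $\op{N}_{K/k}(\mathbf{X})=g(t)$ at $t=\infty$ shows that a curve on which $t$ is a coordinate forces $c_{6}=N_{K/k}(\mathbf{a})$ for the vector $\mathbf{a}$ of top coefficients, so such a curve exists only when $c_{6}\in N_{K/k}(K^{\times})$. To allow an arbitrary $c_{6}$ I would not treat $t$ as a free coordinate but let it be a non-constant rational function $t=t(s)$ whose pole lies at a place $v$ of $k(s)$ over which $h$ acquires a root; there $K$ splits, the local norm map is surjective, and the apparent constraint on $c_{6}$ disappears. Arranging this while keeping the remaining coefficient equations solvable rationally over $k$, and verifying that the only genuine obstruction is the degeneracy (\ref{except}), is the step I expect to be hardest; once one curve $C$ is secured, the torus argument from the first paragraph finishes the proof.
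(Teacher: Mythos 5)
Your first paragraph is sound and is in fact a genuinely different reduction from the paper's: you observe that each fibre with $g(t_{0})\neq 0$ is a torsor under the two-dimensional norm-one torus $R^{(1)}_{K/k}\mathbb{G}_{m}$, which is rational (Voskresenski\u{\i}: all tori of dimension $\leq 2$ are rational), so that fibrewise multiplication gives a dominant map from a rational variety once one $k$-rational curve with non-constant $t$ is found. The paper instead performs the base change $t=\phi(u)$ and invokes unirationality of a cubic surface with a smooth $k(u)$-point via Colliot-Th\'{e}l\`{e}ne--Salberger; your torus argument is arguably cleaner here, though the paper's version also covers situations where the cubic form is not a norm form (see the remark closing Section 3). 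So the final step of your plan is fine.

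The genuine gap is in the construction of the curve, which is the entire content of the theorem and the only place condition (\ref{except}) can enter. Your quadratic ansatz with $t$ itself as the parameter, matching $g$ \emph{term by term}, runs into exactly the obstruction you then concede: comparing $t^{6}$-coefficients forces $c_{6}$ to be a norm from $K$, which fails for general $c_{6}$, and your proposed repair (taking $t=t(s)$ with a pole over a place where $h$ splits) is never carried out; likewise your claim that the solvability of the residual system degenerates \emph{precisely} on the locus (\ref{except}) is asserted, not demonstrated. The missing mechanism, which the paper supplies, is to leave the two top coefficients unmatched: set $X_{1}=pT^{2}+qT+1$, $X_{2}=rT^{2}$, $X_{3}=sT^{2}+uT$ with $u$ a \emph{free} parameter, kill only the coefficients $C_{1},\ldots,C_{4}$ of $T,\ldots,T^{4}$ (a system with a unique solution $p,q,r,s\in k(u)$), so that the defining equation collapses to $T^{5}(A_{5}(u)+A_{6}(u)T)=0$ and is solved by the rational function $T=\phi(u)=-A_{5}/A_{6}$. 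Here $A_{6}\neq 0$ identically (its leading coefficient is $2^{3}3^{18}b^{12}$), and $A_{5}\neq 0$ in $k[u]$ exactly when (\ref{except}) holds; in particular no norm condition on $c_{6}$ ever arises, because the curve is parameterized by $u$, not by $t$. Without this (or an equivalent) device your proposal does not prove the theorem: the hard step you defer is precisely the one the paper's trick resolves.
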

\begin{proof}
Let $G=G(X_{1},X_{2},X_{3},t)$ be a polynomial defining the variety $\cal{S}_{g}$. We note that $\cal{S}_{g}$ contains the $k$-rational point $(1,0,0,0)$. We use it in order to construct a $k$-rational curve lying on $\cal{S}_{g}$. More precisely, we are looking for a rational curve, say $\cal{L}$, lying on $\cal{S}_{g}$. We assume thet $\cal{L}$ can be parameterized by rational functions with parameter $u$ in the following way:
\begin{equation}\label{sub1:deg5}
\cal{L}:\;X_{1}=pT^2+qT+1,\quad X_{2}=rT^2,\quad X_{3}=sT^2+uT,\quad t=T,
\end{equation}
where $p, q, r, s, T$ need to be determined. With $X_{i}$ and $t$ defined above, we get $G(X_{1},X_{2},X_{3},t)=\sum_{i=1}^{6}C_{i}T^i$, where
\begin{equation*}
\begin{array}{lll}
  C_{1}=3q-c_{1},                  &  & C_{3}=b^2u^3+3bru+6pq+q^3-c_{3}, \\
  C_{2}=3p+3q^2-c_{2},             &  & C_{4}=3(b^2su^2+bqru+brs+p^2+pq^2)-c_{4}
\end{array}
\end{equation*}
and $C_{5}, C_{6}\in k[p,q,r,s,u]$ depend on $c_{i}$ for $i=1,\ldots,5$. The system $C_{1}=C_{2}=C_{3}=C_{4}=0$ has exactly one solution with respect to $p,q,r,s$ and it is given by:
\begin{equation*}
\begin{array}{lll}
  p=\frac{1}{9} \left(3 c_2-c_1^2\right),                  &  & r=\frac{-27 b^2 u^3+5 c_1^3-18 c_1 c_2+27 c_3}{81 b u}, \\
                                                           &  &\\
  q=\frac{1}{3}c_{1},             &  & s=\frac{u \left(27 b^2 c_1 u^3-5 c_1^4+27 c_2 c_1^2-27 c_3 c_1-27 c_2^2+81 c_4\right)}{3 \left(54 b^2 u^3+5 c_1^3-18
   c_1 c_2+27 c_3\right)}.
\end{array}
\end{equation*}
For $p, q, r, s$ defined in this way we get $C_{i}=A_{i}/D, i=5,6,$ and $DG(X_{1},X_{2},X_{3},T)=A_{5}T^5+A_{6}T^6$ for $A_{5}, A_{6}\in k[u]$ and $D=3^{12}b^2u^3(54b^2u^3+5c_1^3-18c_1c_2+27c_3)^{3}$. We note that $\op{deg}_{u}A_{6}=18$ and the leading coefficient of $A_{6}$ is $2^{3}3^{18}b^{12}$. In particular $A_{6}\neq 0$ as an element of $k[u]$. We also have $\op{deg}_{u}A_{5}=15$ and $A_{5}\neq 0$ as an element of $k[u]$ if and only if the condition (\ref{except}) is satisfied. In this case, we get a single non-zero solution of the equation $T^5(A_{5}+A_{6}T)=0$ with respect to $T$. Indeed, we have
\begin{equation*}
T=-\frac{A_{5}}{A_{6}}=\phi(u)=\frac{2\cdot 3^{19}b^{10}(5 c_1^2-12 c_2)u^{15}+\mbox{lower order terms in}\;u}{2^{3}3^{18}b^{12}u^{18}+\mbox{lower order terms in}\;u}.
\end{equation*}

Summing up, we see that the existence of a $k$-rational point $P$ with $f(t_{0})\neq 0$ implies that $\cal{S}_{g}$ contains a $k$-rational curve $\cal{L}$, which is not contained in any hyperplane defined by the equation $t=t_{0}$ with $t_{0}\in k$. This allows us to define the base change $t=\phi(u)$ which gives the cubic surface $\cal{S}_{g\circ \phi}$ defined over the field $k(u)$ with a smooth $k(u)$-rational point. This immediately implies $k(u)$-unirationality of $\cal{S}_{g\circ\phi}$ by \cite[Proposition 1.3]{CSal} and thus $k$-unirationality of $\cal{S}_{g}$. Indeed, the map $\Psi$ which guarantees unirationality of $\cal{S}_{g\circ\phi}$ extends to a dominant rational map $(\Psi,\phi)$ which gives unirationality of $\cal{S}_{g}$ and thus $S_{f}$.
\end{proof}


\begin{cor}\label{cor:degree4}
Let $k$ be a field of characteristic zero and let $K/k$ be a pure cubic extension. Consider the variety $\cal{S}_{f}$ with $f\in k[t]$ of degree 4 and suppose that  $\cal{S}_{f}$ contains a nontrivial $k$-rational point. Then $S_{f}$ is $k$-unirational.
\end{cor}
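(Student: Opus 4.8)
The plan is to deduce the corollary from Theorem~\ref{degree4}. First I would normalize: given $f$ of degree $4$ and a nontrivial point $(x_{0},y_{0},z_{0},t_{0})$, the reduction explained just before Theorem~\ref{degree4} replaces $f$ by $g(t)=f(t+t_{0})/f(t_{0})$, so that $\cal{S}_{f}\cong\cal{S}_{g}$ with $g(0)=1$ and $\op{deg}g=4$; thus $g=1+\sum_{i=1}^{4}c_{i}t^{i}$ with $c_{4}\neq0$ and $c_{5}=c_{6}=0$. It then suffices to check the hypothesis~\eqref{except}, because once it holds the theorem gives $k$-unirationality of $\cal{S}_{g}$ and hence of $\cal{S}_{f}$.

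Second, I would determine exactly when \eqref{except} can fail in this degree-$4$ situation. Setting $c_{5}=0$ in the excluded triple, its third coordinate forces $c_{1}^{2}(c_{1}^{3}-12c_{3})=0$; since $c_{4}\neq0$ rules out $c_{1}=0$ (which would give $c_{4}=0$, lowering the degree), we must have $c_{3}=c_{1}^{3}/12$, and then the first two coordinates give $c_{2}=5c_{1}^{2}/12$ and $c_{4}=c_{1}^{4}/144$. A direct check shows these relations are equivalent to $g$ being the perfect square $g(t)=\left(1+\frac{c_{1}}{2}t+\frac{c_{1}^{2}}{12}t^{2}\right)^{2}$. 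After the affine substitution $t\mapsto(s-1)/(6\mu)$ with $c_{1}=12\mu$, its quadratic factor becomes proportional to $s^{2}+s+1=(s^{3}-1)/(s-1)$, whose roots are the primitive cube roots of unity; this is precisely the $\zeta_{3}$-symmetric shape singled out in the degree-$6$ statement. In every other case \eqref{except} holds and the corollary is immediate from Theorem~\ref{degree4}.

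The main obstacle is therefore the single exceptional family $\cal{S}_{g}:\op{N}_{K/k}(X_{1},X_{2},X_{3})=q(t)^{2}$ with $q$ the quadratic above, where the curve of \eqref{sub1:deg5} degenerates ($A_{5}\equiv0$, so $\phi\equiv0$ and $\cal{L}$ collapses into the fibre $t=0$). To handle it I would enrich the ansatz \eqref{sub1:deg5} by one parameter, e.g. take $X_{2}=rT^{2}+vT$, keeping $X_{1}=pT^{2}+qT+1$, $X_{3}=sT^{2}+uT$, $t=T$, and again solve $C_{1}=C_{2}=C_{3}=C_{4}=0$ for $p,q,r,s$ while leaving $u$ (and now $v$) free; the residual polynomial is still $C_{5}T^{5}+C_{6}T^{6}$, and the point is that for generic fixed $v$ the new numerator $A_{5}=A_{5}(u,v)$ is no longer identically zero. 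Granting this, $T=-A_{5}/A_{6}=\phi(u)$ is a nonconstant rational function, so $\cal{S}_{g}$ carries a rational curve not lying in any fibre of $\Phi$; the base change $t=\phi(u)$ then produces a cubic surface over $k(u)$ with a smooth $k(u)$-rational point, which is $k(u)$-unirational by \cite[Proposition 1.3]{CSal}, and composing exactly as in the proof of Theorem~\ref{degree4} yields $k$-unirationality. The crux is precisely the verification that the enrichment makes $A_{5}\not\equiv0$ (equivalently, that this $\zeta_{3}$-symmetric square admits a non-fibre rational curve); an alternative would be to exhibit a rational point whose $t$-coordinate lies outside the two fibres $t\in\{0,-1/(2\mu)\}$ and to re-normalize there, since re-normalization is non-exceptional away from these two fibres, but the existence of such a point over an arbitrary field of characteristic $0$ is itself delicate, so I would favour the ansatz-modification route.
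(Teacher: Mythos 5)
Your first two steps agree with the paper: the normalization to $g(t)=1+\sum_{i=1}^{4}c_{i}t^{i}$, the observation that failure of (\ref{except}) with $c_{5}=0$ and $c_{4}\neq 0$ forces $c_{1}\neq 0$, $c_{2}=5c_{1}^{2}/12$, $c_{3}=c_{1}^{3}/12$, $c_{4}=c_{1}^{4}/144$, and the identification of the exceptional $g$ as the square $\bigl(1+\tfrac{c_{1}}{2}t+\tfrac{c_{1}^{2}}{12}t^{2}\bigr)^{2}$ are all correct and match the paper's reduction (after $t\mapsto 6t/c_{1}$ this is $(3t^2+3t+1)^2$; the paper prints $(3t^2+2t+1)^2$, apparently a typo). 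But the exceptional case is the \emph{entire} content of the corollary beyond Theorem \ref{degree4}, and there your proposal is only a conditional plan: you never verify that the enriched ansatz makes $A_{5}(u,v)\not\equiv 0$, and you say so yourself ("granting this", "the crux is precisely the verification"). That unverified non-vanishing is exactly the statement to be proved, so as written the proof is incomplete.

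Moreover, your specific enrichment has a structural obstruction you did not notice. With $X_{2}=rT^{2}+vT$ the term $3bX_{1}X_{2}X_{3}$ contributes $3buv$ to $C_{2}$ (harmless), makes $C_{3}$ linear in $(r,s)$, namely $3b(ur+vs)+\cdots$, but injects the bilinear term $3brs$ into $C_{4}$. Eliminating $s$ from $C_{3}=0$ and substituting into $C_{4}=0$ therefore yields a genuine quadratic in $r$ with leading coefficient $-3bu/v\neq 0$, so for $v\neq 0$ the system $C_{1}=C_{2}=C_{3}=C_{4}=0$ need not have a solution $(p,q,r,s)\in k(u,v)^{4}$ at all --- generically the solutions live in a quadratic extension of $k(u,v)$, which would not give $k$-unirationality. (The triangular, linear-elimination structure that makes (\ref{sub1:deg5}) work in Theorem \ref{degree4} is destroyed by the extra parameter.) The paper avoids this entirely by switching to a different ansatz for the square case, namely (\ref{sub1:deg4}): the line $X_{1}=T+1$, $X_{2}=uT$, $X_{3}=pT$, $t=qT$ through the point $(1,0,0,0)$, where the rescaling $t=qT$ makes the top coefficient $C_{4}=-9q^{4}$ automatically nonzero; solving only the two linear conditions $C_{1}=C_{2}=0$ gives $p=1/(4bu)$, $q=1/2$, and the residual equation $C_{3}T^{3}+C_{4}T^{4}=0$ yields the explicit nonconstant value $T=(1-32bu^{3}-64b^{2}u^{6})/(36bu^{3})$, after which the base-change argument of Theorem \ref{degree4} applies. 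Your fallback (re-normalizing at a point outside the bad fibres) you correctly discard, since the existence of such a point over an arbitrary field of characteristic $0$ is not available a priori. So: right reduction, right identification of the obstruction, but the key case is left unproven, and the proposed repair would likely fail over $k(u,v)$ without further ideas.
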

\begin{proof}
We are working with $S_{g}$ where $g(t)=1+\sum_{i=1}^{4}c_{i}t^{i}$ with $c_{4}\neq 0$. We have $S_{g}\simeq S_{f}$. In order to get the result we need to check whether the condition (\ref{except}) is satisfied for all $c_{i}\in k$ for $i=1,2,3,4$. We see that (\ref{except}) is not satisfied if and only if $(c_{2},c_{4},c_{5})=(5c_{1}^2/12,c_{1}^4/144,0)$. In particular $c_{1}\neq 0$. Making the (invertible) substitution $t\mapsto 6t/c_{1}$ we are left with the problem of proving unirationality of $S_{h}$ with $h(t)=(3t^2+2t+1)^2$. We assume that $\cal{L}$ can be parametrized by rational functions with parameter $u$ in the following way
\begin{equation}\label{sub1:deg4}
\cal{L}:\;X_{1}=T+1,\quad X_{2}=uT,\quad X_{3}=pT,\quad t=qT,
\end{equation}
where parameters $p, q, T$ still need to be determined.  For $X_{1}, X_{2}, X_{3}, t$ defined in this way we get $F=\sum_{i=1}^{4}C_{i}T^i$, where
\begin{equation*}
C_{1}=3-6q,\; C_{2}=3+3bpu-15q^2,\; C_{3}=1 + b^2 p^3 + 3 b p u - b u^3 - 18q^3,\; C_{4}=-9q^4.
\end{equation*}
We solve the system $C_{1}=C_{2}=0$ with respect to $p, q$ and get $p=1/4bu, q=1/2$. This substitution allows us to find the expression for $T$ in the form
\begin{equation*}
T=\frac{-64 b^2 u^6-32 b u^3+1}{36 b u^3}.
\end{equation*}
The expression for $T$ together with the expressions for $p, q$ give equations (\ref{sub1:deg4}) defining the rational  parametric curve $\cal{L}$ lying on $\cal{S}_{h}$. Using now the same reasoning as at the end of the proof of Theorem \ref{degree4}, we get the result.
\end{proof}

\begin{rem}{\rm We were trying to prove $k$-unirationality of $\cal{S}_{g}$ in the case when the polynomial $g\in k[t]$ is of degree 5 and does not satisfy the condition (\ref{except}). Among other things we were trying to replace the polynomial $g(t)$ by the polynomial $h(Y)=(1+vY)^6g(Y/(1+vY))$. In this way we got the variety $S_{h}$ via the substitution $X_{i}=Y_{i}/(1+vY)^2$ for $i=1,2,3$ and $t=Y/(1+vY)$. Unfortunately, one can check that if $g$ does not satisfy the condition (\ref{except}), then $h(T)$ does not satisfy the condition (\ref{except}), too. Because all our efforts failed, we decided to state the following:

\begin{ques}
Let $k$ be a field of characteristic 0 and let $K=k(\alpha)$, where $\alpha^3+b=0$ with $b\in k\setminus k^{3}$. Put $g(t)=1+\sum_{i=1}^{5}c_{i}t^i\in k[t]$ with $c_{5}\neq 0$ and let us suppose that the condition {\rm (\ref{except})} is not satisfied. Is the variety $\cal{S}_{g}$ unirational over $k$?
\end{ques}

Let us note that if the polynomial $g$ does not satisfy the condition (\ref{except}), then $g$ is reducible, namely
\begin{equation*}
g(t)=-\frac{1}{144}(c_{1}^2t^2+6c_{1}t+12)((c_{1}^3-12c_{3})t^3-c_{1}^2t^2-6c_{1} t-12).
\end{equation*}
In particular, Theorem \ref{degree4} implies that if $g$ is irreducible of degree 5 then $\cal{S}_{g}$ is $k$-unirational and thus the set of $k$-rational points on $\cal{S}_{g}$ is Zariski dense. It is clear that the same is true for a polynomial $f$ corresponding to $g$.
}
\end{rem}

In a recent paper V\'{a}rilly-Alvarado and Viray \cite{VarBir} introduced the notion of a Ch\^{a}telet threefold, which is a variety defined by the equation (\ref{generalequation}) with $n=3$ and $f\in k[t]$ of degree 6. The authors of this paper asked, whether the existence of a $k$-rational point on $\cal{S}_{f}$ implies $k$-unirationality of $\cal{S}_{f}$ \cite[Problem 6.2]{VarBir}. The statement of the Theorem \ref{degree4} gives us a broad family of polynomials $f$ such the variety $\cal{S}_{f}$ is $k$-unirational. In the next corollary we make this result more explicit.

Before we state our result, let us recall that two polynomials $f_{1}, f_{2}\in k[t]$ are equivalent if $\op{deg}f_{1}=\op{deg}f_{2}$ and there exist $\alpha, \beta\in k$ such that $f_{2}(t)=f_{1}(\alpha t+\beta)$.

\begin{cor}\label{cor:degree6}
Let $k$ be a field of characteristic 0 and let $K=k(\alpha)$, where $\alpha^3+b=0$ with $b\in k\setminus k^{3}$. Let $f\in k[t]$ be of degree 6 and suppose that $f$ is not equivalent to the polynomial $h\in k[t]$ satisfying the condition $h(t)=h(\zeta_{3}t)$, where $\zeta_{3}$ is the primitive third root of unity. Let us also suppose that $\cal{S}_{f}$ contains a nontrivial $k$-rational point. Then the variety $\cal{S}_{f}$ given by {\rm (\ref{degree3})} is $k$-unirational.
\end{cor}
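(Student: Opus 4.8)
The plan is to deduce the statement from Theorem \ref{degree4} by reducing to a normalized sextic and then arranging, within the $k$-isomorphism class of $\cal{S}_{f}$, that the exceptional condition (\ref{except}) is met. As in the discussion preceding Theorem \ref{degree4}, I would first use the nontrivial point together with the multiplicativity of the norm form to replace $\cal{S}_{f}$ by $\cal{S}_{g}$, where $g(t)=1+\sum_{i=1}^{6}c_{i}t^{i}$ has degree $6$ (so $c_{6}\neq 0$); this is an isomorphism over $k$ and preserves the existence of a nontrivial point. If the coefficients of $g$ satisfy (\ref{except}), then Theorem \ref{degree4} applies verbatim and $\cal{S}_{g}$, hence $\cal{S}_{f}$, is $k$-unirational. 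So the entire problem is to treat the case in which (\ref{except}) fails, and to show that this failure is unavoidable only when $f$ is $\zeta_{3}$-symmetric.

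In the failing case the natural move is to act by transformations preserving the isomorphism class of the threefold. Scaling $t\mapsto\lambda t$ leaves the exceptional locus invariant, since the three defining equalities in (\ref{except}) are homogeneous; and the M\"obius substitution $g\mapsto (1+vt)^{6}g(t/(1+vt))$ of the Remark (legitimate precisely because $\op{deg}g=6$ is divisible by $3$, so $(1+vt)^{6}$ is a cube and can be absorbed by rescaling $X_{1},X_{2},X_{3}$) also preserves it. The one remaining degree of freedom is a recentering $t\mapsto t+\beta$ followed by renormalization. I would therefore express the failure of (\ref{except}) for the recentered polynomial as three polynomial conditions in $\beta$, and prove that they can hold simultaneously for all admissible $\beta$ only when $g$ is, up to an affine change of variable, a polynomial in $t^{3}$, i.e.\ when some affine image $h$ of $g$ satisfies $h(t)=h(\zeta_{3}t)$. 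The structural fact driving this is that an exceptional sextic carries the factor $3t^{2}+3t+1=(t+1)^{3}-t^{3}$ in its degree-$5$ part (as recorded in the Remark), and it is exactly the $\zeta_{3}$-symmetric polynomials for which this $\zeta_{3}$-feature is stable under every recentering; for all other $f$ the bad centers form a proper closed subset.

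The step I expect to be the main obstacle is the compatibility of recentering with the hypothesis of Theorem \ref{degree4}: recentering at $\beta$ produces constant term $g(\beta)$, and to renormalize to $1$ one needs $g(\beta)$ to be a norm from $K$, whereas a priori only the single value attached to the given point is known to be a norm. Thus it is not enough that a non-exceptional center $\beta$ exist; it must also be a norm value. I would try to circumvent this by not recentering at all and instead constructing the transverse $k$-rational curve directly at the given point, replacing the ansatz (\ref{sub1:deg5}) by a more flexible one (for instance allowing $X_{2}$ and $X_{3}$ extra low-order terms) that introduces a spare parameter; solving the vanishing of the low-degree coefficients should then leave a quadratic whose rational root yields the curve, with the construction degenerating exactly in the $\zeta_{3}$-symmetric case. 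This mirrors the passage in Corollary \ref{cor:degree4}, where the exceptional degree-$4$ polynomial $(3t^{2}+3t+1)^{2}$ is handled by the tailored parametrization (\ref{sub1:deg4}); the delicate point, which I would expect to occupy most of the work, is verifying that the resulting solvability condition is nonvacuous uniformly in $b\in k\setminus k^{3}$ and vanishes precisely on the $\zeta_{3}$-symmetric locus excluded in the statement.
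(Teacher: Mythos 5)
Your reduction to $g(t)=1+\sum_{i=1}^{6}c_{i}t^{i}$ and the dichotomy on condition (\ref{except}) match the paper's setup, but in the case where (\ref{except}) fails your argument has a genuine gap: it ends in an unexecuted plan rather than a proof. The proposed ``more flexible ansatz'' leading to ``a quadratic whose rational root yields the curve'' is precisely the kind of step that can break down: a quadratic over $k$ (or over $k(u)$) need not have a rational root, and nothing in the proposal shows the resulting solvability condition is nonvacuous, let alone uniformly in $b\in k\setminus k^{3}$. The analogy with Corollary \ref{cor:degree4} does not supply this either, since there the tailored parametrization (\ref{sub1:deg4}) is resolved by \emph{linear} eliminations in $p,q$ and then $T$, never by hoping a quadratic splits over the ground field. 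Likewise, your recentering analysis correctly identifies the arithmetic obstruction ($g(\beta)$ would have to be a norm), but then the failing case --- which is the entire content of the corollary --- is left open.

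The idea you are missing is one you actually had in hand. The M\"obius substitution $g(t)\mapsto (1+vt)^{6}g(t/(1+vt))$ is available at no arithmetic cost: as you note, $(1+vt)^{6}$ is a cube, and moreover the substitution fixes $t=0$, so the constant term remains $1$ and the anchor point $(1,0,0,0)$ survives --- no condition ``$g(\beta)$ is a norm'' ever arises. You read the Remark's invariance of the exceptional locus as closing this road, but invariance of a locus does not mean the normal form inside an orbit is uninformative: choosing $v=-c_{1}/6$ kills $c_{1}$, and with $c_{1}=0$ condition (\ref{except}) collapses to $(c_{2},c_{4},c_{5})\neq(0,0,0)$, so failure now means literally $g\in k[t^{3}]$, i.e.\ $g(t)=g(\zeta_{3}t)$ --- exactly what the hypothesis excludes. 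This is the paper's proof in equivalent form: using the nontrivial point and multiplicativity of $\op{N}_{K/k}$ it passes via $t\mapsto 1/t$ to a monic model $h(t)=t^{6}+\sum_{i=0}^{4}c_{6-i}t^{i}$ (monicity comes from the constant term $1$, i.e.\ from the point; the anchor sits at infinity, where the leading coefficient is $1=\op{N}_{K/k}(1,0,0)$, so the translation killing the $t^{5}$ coefficient is harmless), then inverts back to land in Theorem \ref{degree4} with $c_{1}=0$. No new curve construction, and no case analysis beyond the hypothesis, is needed.
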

\begin{proof}
First of all let us note that the existence of a non-trivial $k$-rational point on $\cal{S}_{f}$ with $f$ of degree 6 and the fact that the norm form is multiplicative, implies that $\cal{S}_{f}\simeq \cal{S}_{h}$, where $h(t)=t^{6}+\sum_{i=0}^{4}c_{6-i}t^i$ for some $c_{j}\in k, j=2,3,\ldots,6$. From our assumption on $f$ we know that at least one among the elements $c_{2},c_{4}, c_{5}$ is non-zero. Making the change of variables $X_{i}=Y_{i}/T$ for $i=1,2,3$ and $t=1/T$ we get that $S_{f}\simeq S_{g}$ with $g(T)=1+\sum_{i=2}^{6}c_{i}T^{i}$. We can apply now Theorem \ref{degree4} to the variety $S_{g}$. It is $k$-unirational provided that the condition (\ref{except}) is satisfied. In our case we have $c_{1}=0$ and thus (\ref{except}) is not satisfied if and only if $c_{2}=c_{4}=c_{5}=0$ which is not the case.
\end{proof}

Using the corollary above in the case of a number field $k$ with real embedding in $\R$, we deduce the following interesting result.

\begin{thm}\label{approx}
Let $k$ be a number field with $k\subset \R$ and put $f(t)=a_{6}t^6+\sum_{i=0}^{4}a_{6-i}t^i\in k[t]$ with $a_{0}\neq 0$. Then, for each $\epsilon>0$ there exists a polynomial $g(t)=c_{0}t^6+\sum_{i=0}^{4}c_{6-i}t^i\in k[x]$ such that $|a_{i}-c_{i}|<\epsilon$ for $i=0,2,\ldots, 6$ and for each pure cubic extension $K/k$ of degree 3, the variety $\cal{S}_{g}$ given by the equation $\op{N}_{K/k}(X_{1},X_{2},X_{3})=g(t)$ is $k$-unirational.
\end{thm}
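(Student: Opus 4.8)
The plan is to reduce everything to Corollary~\ref{cor:degree6}: I will produce a single polynomial $g$, close to $f$, such that for every $b\in k\setminus k^3$ the variety $\cal{S}_g$ both carries a nontrivial $k$-rational point and fails to be equivalent to a $\zeta_3$-invariant polynomial. The first thing I would pin down is what the non-equivalence condition means inside our family. Writing $g(t)=c_0t^6+c_2t^4+c_3t^3+c_4t^2+c_5t+c_6$, so that the $t^5$-coefficient vanishes, I claim that $g$ (of degree $6$) is equivalent to some $h$ with $h(t)=h(\zeta_3t)$ precisely when $c_2=c_4=c_5=0$. Indeed, such an $h$ has the form $h_0+h_3s^3+h_6s^6$, and substituting $s=\alpha t+\beta$ produces a $t^5$-coefficient equal to $6h_6\alpha^5\beta$; since $h_6\alpha^6\neq0$ in degree $6$, the vanishing of this coefficient in $g$ forces $\beta=0$, whence $g(t)=h_6\alpha^6t^6+h_3\alpha^3t^3+h_0$ and indeed $c_2=c_4=c_5=0$ (the converse being trivial, as then $g$ is itself $\zeta_3$-invariant). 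This is exactly the dichotomy appearing in the proof of Corollary~\ref{cor:degree6}, and it shows that the non-equivalence condition, call it (II), holds as soon as one of $c_2,c_4,c_5$ is nonzero.

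Next I would build the rational point in a way that is insensitive to the extension. The decisive observation is that $\op{N}_{K/k}(m,0,0)=m^3$ for every $b$, so $(m,0,0,t_0)\in\cal{S}_g$ whenever $g(t_0)=m^3$, and this holds simultaneously for all pure cubic $K/k$. Taking $t_0=0$, it suffices to make the constant term $c_6=g(0)$ a nonzero cube in $k$. This is where the hypothesis $k\subset\R$ enters: since $\Q\subseteq k$ is dense in $\R$ and cubing is a homeomorphism of $\R$, the set $\{m^3:m\in k^{*}\}$ is dense in $\R$, so I may choose $m\in k^{*}$ with $|m^3-a_6|<\epsilon$ and put $c_6:=m^3$. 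Then $(m,0,0,0)$ is a nontrivial $k$-rational point of $\cal{S}_g$ for every $b$, and $|a_6-c_6|<\epsilon$. Call this condition (I).

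Finally I would assemble $g$ by keeping $c_0=a_0$ (so $\deg g=6$, using $a_0\neq0$), $c_2=a_2$, $c_3=a_3$, $c_4=a_4$, $c_5=a_5$, together with $c_6=m^3$ as above; in the single excluded situation $a_2=a_4=a_5=0$ I instead take $c_4$ to be any nonzero element of $k$ with $|c_4|<\epsilon$, which restores (II) while keeping $g$ within $\epsilon$ of $f$. Then $|a_i-c_i|<\epsilon$ for $i=0,2,3,4,5,6$, and $g$ satisfies both (I) and (II) for every pure cubic extension, so Corollary~\ref{cor:degree6} yields the $k$-unirationality of $\cal{S}_g$ in each case. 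The step one might fear is making a single $g$ work for all extensions at once; this is precisely what dissolves through the $b$-independence of the point $(m,0,0,0)$, so the only genuine analytic input is the density of nonzero cubes of $k$ in $\R$, and no extension-dependent (Brauer-type) analysis is needed.
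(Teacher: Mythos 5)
Your proof is correct, and at the top level it follows the same strategy as the paper: perturb the coefficients so that $\cal{S}_{g}$ acquires a nontrivial $k$-rational point, perturb among $c_{2},c_{4},c_{5}$ in the degenerate case to escape the $\zeta_3$-invariant equivalence class, and then invoke Corollary \ref{cor:degree6}. The differences are worth recording, because one of them repairs a real looseness in the paper's argument. The paper passes to $h(t)=t^6f(1/t)$ and approximates the relevant coefficient by a general norm value $\op{N}_{K/k}(u,v,w)$, citing density of the image of the norm map, so that the rational point sits at infinity; but as written the triple $(u,v,w)$, and hence the chosen coefficient, depends on the extension $K$, whereas the theorem demands a \emph{single} $g$ that works for every pure cubic extension simultaneously. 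Your choice of a nonzero cube $c_{6}=m^{3}$ (using density of cubes of $k$ in $\R$) is exactly the $b$-independent instantiation that closes this quantifier issue, since $\op{N}_{K/k}(m,0,0)=m^{3}$ for every $b\in k\setminus k^{3}$; placing the point at $t=0$ also lets you dispense with the reciprocal transformation entirely. In addition, you prove explicitly the dichotomy that a degree-$6$ polynomial with vanishing $t^{5}$-coefficient is equivalent to a $\zeta_3$-invariant polynomial if and only if $c_{2}=c_{4}=c_{5}=0$ (via the $t^{5}$-coefficient $6h_{6}\alpha^{5}\beta$ forcing $\beta=0$), a fact the paper's proof of Corollary \ref{cor:degree6} uses only implicitly. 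Your bookkeeping of the approximation is also sound: $c_{0}=a_{0}\neq 0$ keeps $\deg g=6$, and in the degenerate case $a_{2}=a_{4}=a_{5}=0$ the single perturbation $0<|c_{4}|<\epsilon$ restores non-equivalence while staying within $\epsilon$ of $f$ in every required coefficient. In short: same route as the paper, but with a cleaner, uniform-in-$b$ construction of the rational point and an explicit proof of the equivalence criterion.
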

\begin{proof}
We are working with $S_{h}\simeq S_{f}$, where $h(t)=t^6f(1/t)$. We note that for any given $a_{0}\in k^{*}$ we can find a triple $u,v,w\in k$ such that $|\op{N}_{K/k}(u,v,w)-a_{6}|<\epsilon$ and $\op{N}_{K/k}(u,v,w)\neq 0$, which is a consequence of the density of the image of the norm map $\op{N}_{K/k}:k^3\rightarrow k$. Then we take $c_{0}=\op{N}_{K/k}(u,v,w)$. If $h(t)\neq h(\zeta_{3}t)$ we take $c_{i}=a_{i}$ for $i=2,\ldots,6$. If $h(t)=h(\zeta_{3}t)$ then we take $c_{i}=a_{i}$ for $i=3,6$ and we take $c_{2}=c_{4}=c$ for any $c\in k$ which satisfies $|c|<\epsilon$. Then we put $g(t)=c_{0}t^6+\sum_{i=0}^{4}c_{6-i}t^i$ and note that $\cal{S}_{g}$ contains a $k$-rational point at infinity. Moreover, $S_{g}\simeq S_{h'}$, where $h'(t)=t^6g(1/t)$. From the Corollary \ref{cor:degree6} we get the result.
\end{proof}

The presented results motivate us to state the following:

\begin{conj}\label{conj1}
Let $k$ be a number field and $K/k$ be a cyclic extension of degree 3. Let $f\in k[t]$ be a polynomial of degree 6 and let us suppose that there exists a non-trivial $k$-rational point on $\cal{S}_{f}$. Then $\cal{S}_{f}$ is $k$-unirational.
\end{conj}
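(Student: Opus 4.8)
The plan is to push the Mestre-style construction of Theorem \ref{degree4} and Section \ref{sec3} to its natural limit, extending it from pure cubic extensions to arbitrary cyclic extensions of degree 3 and, simultaneously, removing the exceptional condition that appears in Corollary \ref{cor:degree6}. First I would use the non-trivial $k$-rational point together with the multiplicativity of the norm form to reduce, exactly as in the discussion preceding Theorem \ref{degree4}, to the normalized equation $\cal{S}_{g}$ with $g(t)=1+\sum_{i=1}^{6}c_{i}t^{i}$, so that the point $(1,0,0,0)$ lies on $\cal{S}_{g}$. This reduction is insensitive to whether the extension is pure, so it carries over verbatim to any cyclic $K/k$ generated by a root of $h(x)=x^3+ax+b$ (with $-4a^3-27b^2\in k^{*2}$, which is what makes $K/k$ cyclic).

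Next I would search for a $k$-rational curve $\cal{L}$ on $\cal{S}_{g}$ through $(1,0,0,0)$ using an ansatz with more linear freedom than (\ref{sub1:deg5}), to compensate for the extra cross terms introduced by $a\neq 0$, for instance
\[
\cal{L}:\;X_{1}=pT^2+qT+1,\quad X_{2}=rT^2+uT,\quad X_{3}=sT^2+vT,\quad t=T.
\]
Substituting into the form $G$ defining $\cal{S}_{g}$ gives $G=\sum_{i=1}^{6}C_{i}T^{i}$, and the strategy is to solve $C_{1}=C_{2}=C_{3}=C_{4}=0$ for four of the auxiliary parameters, reducing $G$ to $T^5(A_{5}+A_{6}T)$, and then to set $T=-A_{5}/A_{6}=\phi$, a rational function of the remaining free parameters. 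As at the end of the proof of Theorem \ref{degree4}, the base change $t=\phi$ then presents $\cal{S}_{g\circ\phi}$ as a cubic surface over a rational function field carrying a smooth rational point, whence $k$-unirationality follows from \cite[Proposition 1.3]{CSal}.

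The first obstacle is the cyclic but non-pure case: with $a\neq 0$ the system $C_{1}=\cdots=C_{4}=0$ loses the near-triangular shape it has for the $b$-pure form $X_{1}^3-bX_{2}^3+b^2X_{3}^3+3bX_{1}X_{2}X_{3}$, and one must check that it remains solvable over $k$ and that $A_{5},A_{6}$ do not both vanish identically; this is presumably why Section \ref{sec3} treats only the sparse family $f(t)=t^6+a_{4}t^4+a_{1}t+a_{0}$. The heavier obstacle is the class of polynomials excluded in Corollary \ref{cor:degree6}, namely those equivalent to an $h$ with $h(t)=h(\zeta_{3}t)$, i.e. $h(t)=a_{0}+a_{3}t^3+a_{6}t^6$. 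For these the leading coefficient $A_{5}$ vanishes identically---this is exactly the failure recorded by condition (\ref{except}), which in the degree-6 normalization reads $c_{2}=c_{4}=c_{5}=0$---so a single free parameter no longer suffices and the construction collapses. Here I expect one must exploit the compatibility between the cube-power structure of $h$ and the cubic norm form: either enlarge the ansatz to a genuine two-parameter family of curves whose projection to the $t$-line is non-constant, or run a preliminary substitution exchanging the role of the fibers (in the spirit of the passage to $t^6g(1/t)$ in Theorem \ref{approx}) to leave the invariant locus before applying the main construction.

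Once a non-constant $k$-rational curve is produced in every case, the conclusion is uniform: the curve yields a smooth rational point on the cubic surface obtained after the base change $t=\phi$, that surface is unirational over the function field by \cite[Proposition 1.3]{CSal}, and the parametrization composed with $\phi$ descends to a dominant rational map witnessing $k$-unirationality of $\cal{S}_{f}$, just as in Theorem \ref{degree4}. I expect the exceptional sextics $a_{0}+a_{3}t^3+a_{6}t^6$, in combination with the cyclic (non-pure) generalization, to be the genuine crux; it is plausible that a successful treatment of precisely this invariant locus is what separates the results proved above from the full conjecture.
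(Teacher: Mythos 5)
The statement you set out to prove is Conjecture \ref{conj1}: the paper explicitly leaves it open, so there is no proof of it in the paper at all, and your proposal does not close it either. What you give is a faithful reconstruction of the machinery of Theorem \ref{degree4} and Section \ref{sec3}, together with an accurate diagnosis of the two obstructions: the non-pure cyclic case $a\neq 0$ (your criterion $-4a^3-27b^2\in k^{*2}$ for cyclicity is correct), and the sextics equivalent to $h$ with $h(t)=h(\zeta_{3}t)$, i.e. $h(t)=a_{0}+a_{3}t^3+a_{6}t^6$, for which $A_{5}\equiv 0$. But at exactly these two crux points the argument retreats to ``I expect'' and ``it is plausible,'' so nothing is established beyond what Corollary \ref{cor:degree6} and Theorem \ref{thmnoncyclic} already give. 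In the cyclic non-pure case you never verify that the system $C_{1}=\cdots=C_{4}=0$ is solvable over $k$ in the remaining parameters of your enlarged ansatz, nor that the resulting $A_{5}$ is not identically zero; Theorem \ref{thmnoncyclic} restricts to the sparse family $f(t)=t^6+a_{4}t^4+a_{1}t+a_{0}$, $a_{1}a_{4}\neq 0$, precisely because this elimination is not available in general.

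Worse, your two suggested escape routes for the invariant locus are, respectively, known to fail and unexecuted. A preliminary change of variable cannot leave the locus: affine equivalences $t\mapsto \alpha t+\beta$ are exactly what the hypothesis of Corollary \ref{cor:degree6} quotients out; the inversion $t\mapsto 1/t$ used in Theorem \ref{approx} sends $a_{0}+a_{3}t^3+a_{6}t^6$ to $a_{6}+a_{3}t^3+a_{0}t^6$, which is again $\zeta_{3}$-invariant; and the Remark following Corollary \ref{cor:degree4} records that even the more general fractional substitution $h(Y)=(1+vY)^6g(Y/(1+vY))$ preserves failure of condition (\ref{except}) --- this stability under all the available substitutions is exactly why the paper poses the degree-5 analogue as an open Question rather than proving it. As for ``enlarging the ansatz'': on the invariant locus (in the normalization $c_{1}=c_{2}=c_{4}=c_{5}=0$) the solved system leaves $DG=A_{6}T^6$, whose only root is $T=0$, so the failure is structural --- adding linear freedom of the same shape as (\ref{sub1:deg5}) does not help, and a ``genuine two-parameter family of curves'' is named but never constructed. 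So your proposal is a reasonable research program that correctly identifies why the conjecture is a conjecture, but it contains no new step; it remains open on your account exactly where it is open in the paper.
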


We finish this section with the following simple result.

\begin{thm}
Let $k$ be a field of characteristic 0 and let $K=k(\alpha)$, where $\alpha^3+b=0$ with $b\in k\setminus k^{3}$. Put $f(t)=t^{3m}+a_{2}t^{m}+a_{1}t+a_{0}\in k[t]$ with $a_{1}\neq 0$. Then the variety $\cal{S}_{f}$ is $k$-unirational.
\end{thm}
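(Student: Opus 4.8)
The plan is to exhibit an explicit $k$-rational curve on $\cal{S}_{f}$ which dominates the $t$-line, and then conclude exactly as at the end of the proof of Theorem \ref{degree4}. Recall that in the pure cubic case $\op{N}_{K/k}(X_{1},X_{2},X_{3})=X_{1}^3-bX_{2}^3+b^2X_{3}^3+3bX_{1}X_{2}X_{3}$. Note that, in contrast to the earlier statements, no rational point is assumed: the construction will produce one for free.

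First I would set $X_{1}=t^{m}$, so that $X_{1}^3=t^{3m}$ already matches the leading term $t^{3m}$ of $f$. With this choice the defining equation $\op{N}_{K/k}(X_{1},X_{2},X_{3})=f(t)$ reduces, after cancelling $t^{3m}$, to
\[
3bX_{2}X_{3}\,t^{m}+\big(-bX_{2}^3+b^2X_{3}^3\big)=a_{2}t^{m}+a_{1}t+a_{0}.
\]
I would then force the coefficient of $t^{m}$ to agree by imposing $3bX_{2}X_{3}=a_{2}$: keeping $X_{3}=u$ as a free parameter and putting $X_{2}=a_{2}/(3bu)$ handles this on the open set $u\neq 0$ (the case $a_{2}=0$ is the trivial specialization $X_{2}=0$).

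The key step is the remaining scalar equation $-bX_{2}^3+b^2X_{3}^3=a_{1}t+a_{0}$. After substituting $X_{2}=a_{2}/(3bu)$ and $X_{3}=u$, its left-hand side is the rational function $C(u):=b^2u^3-\tfrac{a_{2}^3}{27b^2u^3}$, which is independent of $t$. \emph{This is exactly where the hypothesis $a_{1}\neq 0$ enters}: the equation $a_{1}t+a_{0}=C(u)$ is linear in $t$, hence solvable over $k$, giving $t=\phi(u):=\big(C(u)-a_{0}\big)/a_{1}$. Setting finally $X_{1}=\phi(u)^{m}$, $X_{2}=a_{2}/(3bu)$, $X_{3}=u$, $t=\phi(u)$ yields, by construction, a solution of $\op{N}_{K/k}(X_{1},X_{2},X_{3})=f(t)$ depending on the single parameter $u$, that is, a $k$-rational curve $\cal{L}\subset\cal{S}_{f}$.

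To finish, I would check that $\cal{L}$ is not contained in a fibre of $\Phi$, which is immediate since $\phi(u)$ is a non-constant rational function (as $b\neq 0$). The base change $t=\phi(u)$ then produces the cubic surface $\cal{S}_{f\circ\phi}$ over $k(u)$, and $\cal{L}$ provides a $k(u)$-rational point on it; this point is automatically smooth because $f(\phi(u))\neq 0$ and the norm cubic is singular only at the origin. Unirationality of $\cal{S}_{f\circ\phi}$ over $k(u)$ then follows from \cite[Proposition 1.3]{CSal}, whence $\cal{S}_{f}$ is $k$-unirational, exactly as in the last paragraph of the proof of Theorem \ref{degree4}. I do not expect any genuine obstacle: the only real idea is the observation that, thanks to $a_{1}\neq 0$, the ``constant'' part of the norm can absorb the linear term by solving one linear equation for $t$, thereby turning the auxiliary variable $X_{3}=u$ into the parameter of an honest multisection.
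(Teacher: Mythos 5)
Your proposal is correct and follows essentially the same route as the paper: the paper's proof likewise sets $X_{1}=t^{m}$, takes one of $X_{2},X_{3}$ to be the free parameter $u$ and the other equal to $a_{2}/(3bu)$ (you merely swap their roles), and then uses $a_{1}\neq 0$ to solve the resulting degree-one equation in $t$, getting the same nonconstant $\phi(u)$ up to the symmetry. The concluding step via the base change $t=\phi(u)$ and \cite[Proposition 1.3]{CSal} is exactly the paper's argument as well.
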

\begin{proof}
Let $F=F(X_{1},X_{2},X_{3},t)$ be a polynomial defining the variety $\cal{S}_{f}$. We put
\begin{equation*}
X_{1}=t^{m},\quad X_{2}=u,\quad X_{3}=\frac{a_{2}}{3bu}.
\end{equation*}
For $X_{i}$ defined in this way the polynomial $F$ (in the variable $t$) is of degree 1 with the root
\begin{equation*}
t=\phi(u)=-\frac{27 b^2 u^6+27 b a_0 u^3-a_2^3}{27 b a_1 u^3},
\end{equation*}
which under the assumption $a_{1}\neq 0$ is a non-constant element of $k(u)$. We thus see that the cubic surface $\cal{S}_{f\circ\phi}$ is $k(u)$-unirational and this implies the $k$-unirationality of $S_{f}$.
\end{proof}

\section{Solutions of the equation $\op{N}_{K/k}(X_{1},X_{2},X_{3})=f(t)$ for general cubic extension and $f$ of degree 6}\label{sec3}

We consider now the variety $\cal{S}_{f}$ given by the equation (\ref{degree3}) for a general extension $K/k$ of degree 3 and a monic polynomial $f\in k[t]$ of degree 6. We thus assume that $K=k(\alpha)$, where $\alpha$ is a root of an irreducible polynomial $h(x)=x^3+ax+b\in k[x]$ with $a\neq 0$. Unfortunately, in this case we were unable to prove the $k$-unirationality of $\cal{S}_{f}$ for all polynomials $f$ which satisfy $f(t)\neq f(\zeta_{3}t)$. However, we prove the following result.

\begin{thm}\label{thmnoncyclic}
Let $k$ be a field of characteristic 0 and put $K=k(\alpha)$, where $\alpha^3+a\alpha +b=0$ and $f(t)=t^6+a_{4}t^4+a_{1}t+a_{0}\in k[t]$ with $a_{1}a_{4}\neq 0$. Then the variety $\cal{S}_{f}$ given by {\rm (\ref{degree3})} is unirational over $k$.
\end{thm}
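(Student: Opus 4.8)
The plan is to mimic the parametrization strategy used throughout Section~\ref{sec2}: rather than trying to solve the full system at once, I would introduce an explicit rational curve $\cal{L}$ on $\cal{S}_{f}$ depending on an auxiliary parameter $u$, impose vanishing of the low-order coefficients in $t$ to pin down most of the free parameters, and then solve the remaining single equation linearly (or at worst with one extra free parameter) for the curve parameter. Since $f$ is monic of degree $6$ with $f(0)=a_{0}$, the first question is whether we already have a nontrivial $k$-rational point; here the profile $f(t)=t^6+a_4t^4+a_1t+a_0$ with the missing $t^5, t^3, t^2$ terms is what makes the attack tractable, and the hypothesis $a_{1}a_{4}\neq 0$ is presumably exactly what is needed to keep the solving step non-degenerate.

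Concretely, I would write down an ansatz of the shape
\begin{equation*}
X_{1}=t^{2}+\text{(lower)},\quad X_{2}=\text{(polynomial in $t$ with coefficients in $u$)},\quad X_{3}=\text{(ditto)},\quad t=t,
\end{equation*}
chosen so that the dominant term of $\op{N}_{K/k}(X_{1},X_{2},X_{3})$ reproduces the monic $t^{6}$, mirroring how in the proof of Theorem~\ref{degree4} the leading guess $X_{1}=pT^{2}+qT+1$ was arranged. Expanding $F(X_{1},X_{2},X_{3},t)-f(t)=\sum C_{i}t^{i}$ and reading off the coefficients $C_{5},C_{4},C_{3},C_{2}$, I would solve the resulting equations successively for the free coefficients of $X_{1},X_{2},X_{3}$. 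The absence of $t^{5},t^{3},t^{2}$ terms in $f$ should make several of these equations homogeneous and hence easy to satisfy; the surviving constraint matching $a_{4}$ and $a_{1}$ is where the condition $a_{1}a_{4}\neq 0$ enters, guaranteeing the relevant coefficient is invertible so that I can solve for the last parameter rationally over $k(u)$.

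After the parameters are fixed, I expect to be left with an equation of the form $t^{j}(A+Bt)=0$ (as in the degree $4$ and degree $5$ cases), yielding a nonconstant $t=-A/B=\phi(u)\in k(u)$ precisely because $B\neq 0$ as an element of $k[u]$. This last nonvanishing claim is the step I would check most carefully: one must verify that $B$, the leading surviving coefficient, is not identically zero in $u$, and that the induced map $u\mapsto t$ is nonconstant so that the curve is not contained in a fiber $t=t_{0}$. The main obstacle, and the reason the general $a\neq 0$ case is harder than Section~\ref{sec2}, is that the norm form $X_{1}^{3}-\ldots$ now acquires cross terms coming from $a\neq 0$, so the algebra producing $B$ is messier and the vanishing locus of $B$ in terms of $a,b,a_{0},a_{1},a_{4}$ could in principle be nonempty; I would need to confirm that $a_{1}a_{4}\neq 0$ keeps us off that locus.

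Once a nonconstant $t=\phi(u)$ is in hand, the conclusion follows exactly as in Theorem~\ref{degree4}: the base change $t=\phi(u)$ produces a cubic surface $\cal{S}_{f\circ\phi}$ over $k(u)$ carrying a smooth $k(u)$-rational point, whence $k(u)$-unirationality by \cite[Proposition 1.3]{CSal}, and composing the unirationality map with $\phi$ gives a dominant rational map exhibiting $k$-unirationality of $\cal{S}_{f}$.
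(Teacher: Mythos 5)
Your high-level skeleton is indeed the paper's: construct a rational curve by matching coefficients of powers of $t$, extract a nonconstant $t=\phi(u)$, base-change to a cubic surface over $k(u)$ carrying a smooth rational point, and invoke \cite[Proposition 1.3]{CSal}. But two of the load-bearing ideas are missing, and as written your ``solve the resulting equations successively for the free coefficients'' step would not go through. First, the ansatz: the paper does \emph{not} take $X_{2},X_{3}$ polynomial in $t$; it takes $X_{1}=t^{2}+p$ with $X_{2},X_{3}$ \emph{independent} of $t$. The norm form is then even in $t$, so the odd coefficients $C_{5},C_{3},C_{1}$ coming from $\op{N}_{K/k}(X_{1},X_{2},X_{3})$ vanish identically, and after subtracting $f$ the only surviving odd-degree datum is the untouched term $-a_{1}t$. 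This parity trick is precisely what guarantees that, once $C_{4}=0$ and $C_{2}=0$ are arranged, the leftover equation is $A_{0}+A_{1}t=0$ with $A_{1}=-27a^{3}a_{1}(12a^{3}+81b^{2}-u^{2})^{3}$, nonzero exactly because $a_{1}\neq 0$. In particular the final equation is linear in $t$ because the \emph{high}-order coefficients are killed and the low-order ones are kept and used --- the opposite of your anticipated shape $t^{j}(A+Bt)=0$ obtained by killing low-order coefficients, which is the mechanism of Theorem \ref{degree4}, not of this theorem.

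Second, and more seriously: after eliminating $p=(a_{4}+2aX_{3})/3$ from $C_{4}=0$, the remaining condition $C_{2}=0$ is not linear in the free parameters. It is the conic
\begin{equation*}
\cal{C}:\;(2a^{2}X_{3}-9bX_{2})^{2}=4a^{2}a_{4}^{2}+3(4a^{3}+27b^{2})X_{2}^{2}
\end{equation*}
in the $(X_{2},X_{3})$-plane, so a scheme of successive elimination ``with the relevant coefficient invertible'' stalls: one needs a $k$-rational point on $\cal{C}$ before any parametrization over $k(u)$ exists. The paper exhibits one, $(X_{2},X_{3})=(0,a_{4}/a)$ --- note this uses $a\neq 0$, the case $a=0$ being the pure cubic setting of Section \ref{sec2} --- and then parametrizes $\cal{C}$ rationally. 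This is also where the hypothesis on $a_{4}$ does its real work: $a_{4}\neq 0$ keeps the conic point and the parametrization nondegenerate, and the paper's analysis of $A_{0}$ shows it is what makes $\phi=-A_{0}/A_{1}$ nonconstant; your proposal correctly flags these nonvanishing checks as the delicate ones but supplies no mechanism for them. So: right strategic frame and correct endgame, but the two pivotal ideas --- the parity ansatz with $X_{2},X_{3}$ constant in $t$, and the rational point on the conic $\cal{C}$ --- are exactly the content of the paper's proof, and neither appears in the proposal.
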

\begin{proof}
In this case the norm form takes the form $\op{N}_{K/k}=\op{N}_{K/k}(X_{1},X_{2},X_{3})$, where
\begin{equation*}
\op{N}_{K/k}=X_{1}^3- bX_{2}^3+ b^2X_{3}^3 + (aX_{2}+3bX_{3})X_{1}X_{2}-(2aX_{1}^2-a^2X_{1}X_{3}-abX_{2}X_{3})X_{3}.
\end{equation*}
Let $G=G(X_{1},X_{2},X_{3},t)$ be the polynomial defining the variety $\cal{S}_{f}$.
We use exactly the same approach as in the proof of Theorem \ref{degree4}. This time we just take $X_{1}=t^2+p$, where $p$ needs to be determined. We thus get $G(X_{1},X_{2},X_{3},t)=\sum_{i=0}^{4}C_{i}t^i$, where
\begin{equation*}
C_{2}=a^2X_3^2-4apX_3+aX_2^2+3bX_2X_3+3p^2,\quad C_{3}=0,\quad C_{4}=3p-a_{4}-2aX_{3}.
\end{equation*}
Eliminating $p$ from the equation $C_{4}=0$ we are left with the equation $C_{2}=0$ defining a curve, say $\cal{C}$, in the plane $(X_{2},X_{3})$. The equation for $\cal{C}$ can be rewritten in the form
\begin{equation*}
\cal{C}:\;(2a^2X_{3}-9bX_{2})^2=4a^2a_{4}^2+3(4a^3+27b^2)X_{2}^2.
\end{equation*}
The curve $\cal{C}$ is of genus 0 and has a rational point $(X_{2},X_{3})=(0,a_{4}/a)$ and thus can be parameterized by rational functions. A parametrization of $\cal{C}$ together with the expression for $p$ is given by
\begin{equation*}
X_{2}=\frac{4 a a_4 u}{3(4a^3+27b^2)-u^2},\quad X_{3}=\frac{a_4 \left(12 a^3+81 b^2+18 b u+u^2\right)}{a(3(4a^3+27b^2)-u^2)},\quad p=\frac{a_4+2aX_{3}}{3}.
\end{equation*}
For $X_{2}, X_{3}$ and $p$ chosen in this way we have the equality $DG(X_{1},X_{2},X_{3},t)=A_{0}+A_{1}t$, where $\op{deg}A_{0}=6$ and $D=A_{1}=-27a^3a_1(12 a^3+81b^2-u^2)^3$. From the assumption on $a_{1}$ we know that $DA_{1}\neq 0$. Careful analysis of the coefficients of the polynomial $A_{0}$ shows that if the coefficients of $f$ satisfy $a_{1}a_{4}\neq 0$ then the function $t=\phi(u)=-A_{0}/A_{1}$ satisfies $\phi\in k(u)\setminus k$. Thus, we have found a rational curve lying on $\cal{S}_{f}$. Finally, the same argument as at the end of the proof of Theorem \ref{degree4} gives $k$-unirationality of the variety $\cal{S}_{f}$.
\end{proof}

\begin{rem}
{\rm It is natural to ask whether the method employed in order to get the $k$-unirationality of the varieties considered in this paper can be used in other situations. More precisely, one can ask the following.

\begin{ques}\label{ques2}
 Let $f\in k[t]$. How general an indecomposable form $F\in k[X_{1},X_{2},X_{3}]$ of degree 3 can be such that a variety defined by the equation $F(X_{1},X_{2},X_{3})=f(t)$ is unirational over $k$ for most choices of $f$ of fixed degree?
\end{ques}

For example, let us consider the case of a monic polynomial $f\in k[t]$ of degree 6. It would be rather unexpected if taking the form
\begin{equation*}
F(X_{1},X_{2},X_{3})=X_{1}^3+aX_{2}^3+bX_{3}^3+(cX_{1}+dX_{2}+eX_{3})X_{2}X_{3},
\end{equation*}
we could prove the $k$-unirationality of the hypersurface defined by the equation
$$\cal{S}:\;F(X_{1},X_{2},X_{3})=f(t),$$
where $f(t)=t^6+\sum_{i=0}^{4}a_{i}t^{i}\in k[t]$ and $a,b,c,d,e\in k$ satisfy certain conditions. We note that for a generic choice of $a, b, c, d, e\in k$ the form $F$ is absolutely irreducible, i.e. is irreducible as a polynomial in $\bar{k}[X_{1},X_{2},X_{3}]$. Let $G(X_{1},X_{2},X_{3},t)=F(X_{1},X_{2},X_{3})-f(t)$ be the polynomial defining the hypersurface $\cal{S}$. In order to verify the $k$-unirationality of $\cal{S}$, it is enough to take
\begin{equation}\label{xyz:genform}
X_{1}=t^2+\frac{a_{4}}{3},\quad X_{2}=\frac{b_3-b u^3}{c u},\quad X_{3}=ut+\frac{u \left(3 b e u^4-3 a_3 e u-a_4^2 c+3 a_2 c\right)}{3 c \left(2 b u^3+a_3\right)}.
\end{equation}
Indeed, for $X_{1},X_{2},X_{3}$ chosen in this way we note an equality $DG(X_{1},X_{2},X_{3},t)=C_{1}t+C_{0}$, where $C_{0}, C_{1}\in k[u]$ depend on the coefficients $a, b, c, d, e$ and $a_{i}$ for $i=0,\ldots,4$. Moreover, we have $D=27c^3u^3(2bu^3+a_{3})^{3}$. If $C_{0}C_{1}\neq 0$ as a polynomial in $k[u]$, we get a solution $t=\phi(u)=-C_{0}/C_{1}$. We have $\op{deg}C_{1}=17$ and $\op{deg}C_{0}=18$. The expression for $t$ together with the expressions for $X_{1}, X_{2}, X_{3}$ given by (\ref{xyz:genform}) yield a parametrization (in the parameter $u$) of a rational curve on $\cal{S}$ with $f(\phi(u))\neq 0$. The existence of a rational curve lying on $\cal{S}$ allows us to define a rational base change $t=\phi(u)$. Then the (cubic) surface $\cal{S}_{\phi}:\;F(X_{1},X_{2},X_{3})=f(\phi(u))$ (treated as a surface over the field $k(u)$) contains a smooth $k(u)$-rational point $P$ with coordinates given by (\ref{xyz:genform}) and thus $\cal{S}_{\phi}$ is $k$-unirational over $k(u)$. As an immediate consequence we get the $k$-unirationality of $\cal{S}$ over $k$.

It is possible to write explicit conditions on the coefficients of the polynomial $f$ and the form $F$ which will guarantee that $\phi\in k(u)\setminus k$. For example, if  $abcea_{3}\neq 0$ then $\phi\in k(u)\setminus k$.
}
\end{rem}

\bigskip


\vskip 1cm

\noindent Maciej Ulas, Jagiellonian University, Faculty of Mathematics and Computer Science, Institute of
Mathematics, {\L}ojasiewicza 6, 30-348 Krak\'ow, Poland; email:
maciej.ulas@uj.edu.pl

\end{document}